\renewcommand{\epsilon}{\varepsilon}
\renewcommand{\ell}{x}
\newtheorem{thm}{Theorem}\numberwithin{thm}{section}
\newtheorem{lem}[thm]{Lemma}
\newtheorem*{con2}{Conjecture}
\newtheorem*{rema2}{Remark}
\begin{document}
\begin{center}
\huge{Diophantine equations involving double factorials}\\[1cm] 
\end{center}
\begin{center}

\large{Sa$\mathrm{\check{s}}$a Novakovi$\mathrm{\acute{c}}$}\\[0,5cm]
{\small October 2025}\\[0,5cm]
\end{center}
{\small \textbf{Abstract}. 
We are motivated by a result of Alzer and Luca who presented all the integer solutions to the relations $(k!)^n-k^n=(n!)^k-n^k$ and $(k!)^n+k^n=(n!)^k+n^k$. We modify the equations by considering the double factorial instead and present all integer solutions. We also consider some variations of these equations. Furthermore, we study equations of the form $f(x)=A_1^{n_1}n_1!!\cdots A_r^{n_r}n_r!!$, where $f(x)$ is a rational polynomial, and show that under the ABC conjecture there are only finitely many integer solutions.}
\begin{center}
	\tableofcontents
\end{center}
\section{Introduction}
The theory of Diophantine equations has a long and rich history and has attracted the attention of many mathematicians. In particular, the study of diophantine equations involving factorials have been studied extensively. For example Brocard \cite{BR}, and independently Ramanujan \cite{RA}, asked to find all integer solutions for $n!=x^2-1$. It is still an open problem, known as Brocard's problem, and it is believed that the equation has only three solutions $(x,n)=(5,4), (11,5)$ and $(71,7)$. Overholt \cite{O} observed that a weak form of Szpiro's-conjecture implies that Brocard's equation has finitely many integer solutions. Some further examples of similar equations are:
\begin{itemize}
	\item[1)] $n!=x^k\pm y^k$ and $n!\pm m!=x^k$, see \cite{EO}.
	\item[2)] $\phi(x)=n!$, where $\phi$ is the Euler totient function \cite{FL}.
	\item[3)] $p(x)=m!$, where $p(x)\in\mathbb{Z}[x]$ \cite{L}.
	\item[4)] $\alpha\,m_1!_{S_1}\cdots m_r!_{S_r}=f(n!)$, where $f$ is an arithmetic function and $m_i!_{S_i}$ are certain Bhargava factorials \cite{BN}.
\end{itemize}
For the equations 1) and 4), it was shown that the number of integer solutions is finite. The equation in 3) has finitely many integer solutions, provided the ABC-conjecture holds, and 2) does have infinitely many solutions. There are a lot of more diophantine equations involving factorials and ploynomials that have been studied and we refer the interested reader to \cite{BN}, \cite{NO} and the references therein. For example, Alzer and Luca \cite{AL} considered the equations  $(k!)^n-k^n=(n!)^k-n^k$ and $(k!)^n+k^n=(n!)^k+n^k$ and presented all the integer solutions. Their results motivated us to study the diophantine relations
$$
(n!!)^{k}-n^k=(k!!)^{n}-k^n \quad \textnormal{and} \quad (n!!)^k+n^k=(k!!)^n+k^n
$$
and some variants of them (see Theorems 1.3, 1.4 and 1.5). Here $n!!$ denotes the double factorial. There are certainly more possible variants of the before mentioned equations that can be solved in a similar way as presented in the present paper. However, we have only focused on the considred ones. We want to mention that a crucial ingredient for the proofs of Theorems 1.1-1.5 is Lemma 2.1. We believe that this is known to the experts but could not find a reference for it. Therefore we give a proof.\\ 

\noindent
Our results are the following theorems.
\begin{thm}
	Let $n$ and $k$ be positive integers. The equation
	$$
	(n!!)^{k}-n^k=(k!!)^{n}-k^n
	$$
	holds if and only if $k=n$ or $(k,n)=(1,2), (2,1), (1,3), (3,1), (2,3), (3,2)$.
\end{thm}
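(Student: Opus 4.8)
The plan is to use the symmetry of the equation under the swap $(k,n)\mapsto(n,k)$ to reduce to $k\le n$, and then to split according to where $\min(k,n)$ and $\max(k,n)$ lie relative to $4$, which is the threshold past which $m!!>m$ holds strictly (recall $m!!=m$ precisely for $m\in\{1,2,3\}$). If $k=n$ both sides agree, so assume $k<n$. If $n\le 3$, then also $k\le 3$, so $n!!=n$ and $k!!=k$, both sides vanish, and the equation holds; this yields exactly the pairs $(k,n)=(1,2),(1,3),(2,3)$ and, by symmetry, their swaps. If $k\le 3$ but $n\ge 4$, the right-hand side $(k!!)^n-k^n$ is $0$ while the left-hand side equals $(n!!)^k-n^k>0$ since $n!!>n\ge 1$ and $k\ge 1$; hence there is no solution. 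So the only remaining case is $4\le k<n$ (in particular $n\ge 5$), where both sides are strictly positive.

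In the main case I would show that the left-hand side strictly dominates the right-hand side, which contradicts the equation. For this I need two comparisons. The first is $(n!!)^k>(k!!)^n$ for $k<n$; equivalently $(n!!)^{1/n}>(k!!)^{1/k}$, i.e.\ strict monotonicity of $m\mapsto(m!!)^{1/m}$, which I expect to be (a consequence of) Lemma~2.1 — alternatively, a lower bound for $n!!$ together with an upper bound for $k!!$ supplied by that lemma would do. The second is $n^k<k^n$ for $3\le k<n$, which follows because $x\mapsto(\ln x)/x$ is strictly decreasing on $[3,\infty)$, so $(\ln n)/n<(\ln k)/k$, i.e.\ $k\ln n<n\ln k$. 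Combining the two gives
$$
(n!!)^k-n^k>(k!!)^n-k^n,
$$
the desired contradiction. Reinstating the swapped pairs then produces exactly the stated solution set.

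The genuinely substantive step is the first comparison $(n!!)^k>(k!!)^n$ for $4\le k<n$ — that is, getting the double-factorial growth estimate of Lemma~2.1 into the form needed here; once that is in hand, the sign-chasing in the other cases and the elementary inequality $n^k<k^n$ are routine. A minor point to handle carefully is the case division at $n=4$, since it is exactly there that $n!!>n$ first forces the left-hand side to be positive and rules out small $k$.
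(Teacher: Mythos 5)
Your proposal is correct and follows essentially the same route as the paper: reduce by symmetry, dispose of the small cases where $m!!=m$ (i.e.\ $\min(k,n)\le 3$) directly, and in the main case combine the strict monotonicity of $m\mapsto (m!!)^{1/m}$ (exactly the paper's Lemma 2.1) with $n^k<k^n$ (monotonicity of $n^{1/n}$, equivalently of $(\ln x)/x$) to get $(n!!)^k-n^k>(k!!)^n-k^n$. The only difference is cosmetic: the paper runs the inequality argument already for $k\ge 3$ and treats $k=1,2$ separately, while you start it at $k\ge 4$ and absorb $k=3$ into the small-case analysis.
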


\begin{thm}
	Let $n$ and $k$ be positive integers. The equation
	$$
	(n!!)^k+n^k=(k!!)^n+k^n
	$$
	holds if and only if $k=n$.
\end{thm}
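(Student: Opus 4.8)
The plan is to use the symmetry between $k$ and $n$ to reduce to the case $1\le k<n$ (for $k=n$ the identity is trivial), to dispose of the values $k\in\{1,2,3\}$ by elementary arguments, and for $k\ge 4$ to derive a contradiction from a lower bound for $n!!$ combined with Lemma 2.1.

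For $k=1$ the equation becomes $n!!+n=2$, which is impossible once $n\ge 2$. For $k=2$ it becomes $(n!!)^{2}+n^{2}=2^{n+1}$: if $n$ is odd and $n\ge 3$ then, since squares of odd numbers are $\equiv 1\pmod 8$, the left side is $\equiv 2\pmod 8$ while $2^{n+1}\equiv 0\pmod 8$; and if $n$ is even and $n\ge 4$ then $(n!!)^{2}=2^{n}\big((n/2)!\big)^{2}\ge 2^{n+2}>2^{n+1}$. Hence $n=2$. For $k=3$ it becomes $(n!!)^{3}+n^{3}=2\cdot 3^{n}$, and for $n\ge 4$ one has $n!!>(2\cdot 3^{n})^{1/3}$: indeed $n!!/3^{n/3}$ increases along each residue class modulo $2$ (the ratio of consecutive terms being $(n+2)/3^{2/3}>1$) and the two base cases $n=4,5$ are checked by hand, so the left side is already too large. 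Hence $n=3$.

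For $k\ge 4$ and $n>k$ I argue as follows. Since $k!!\ge k(k-2)\ge 2k$, we have $k^{n}\le (k!!)^{n}2^{-n}$, so if the equation held then
$$
(n!!)^{k}=(k!!)^{n}+k^{n}-n^{k}<(1+2^{-n})(k!!)^{n}.
$$
It therefore suffices to prove the reverse inequality $(n!!)^{k}\ge (1+2^{-n})(k!!)^{n}$. To this end I would bound $n!!$ below by the product of its factors exceeding $k$. If $n\equiv k\pmod 2$, so $n\ge k+2$, then $n!!\ge k!!\,(k+2)^{(n-k)/2}$, and since $(k+2)^{k/2}/k!!\ge\tfrac 92$ for $k\ge 4$ this gives $(n!!)^{k}\ge (\tfrac 92)^{\,n-k}(k!!)^{n}$, far more than needed. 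If $n\not\equiv k\pmod 2$ and $n\ge k+3$, then $n!!\ge (k-1)!!\,(k+1)^{(n-k+1)/2}$, and the same kind of estimate works, using in addition that $(k+2)^{k/2}/k!!\to\infty$ so that only finitely many pairs require a direct check. The one genuinely delicate case is $n=k+1$, where $n!!=(k+1)!!$ and one must show $\big((k+1)!!\big)^{k}\ge(1+2^{-k-1})(k!!)^{k+1}$: by Lemma 2.1 the ratio $\big((k+1)!!\big)^{k}/(k!!)^{k+1}$ already exceeds $1$, and a Wallis-type estimate shows it is increasing in $k$ along each residue class modulo $2$, so it is bounded below by its values at $k=4$ and $k=5$, approximately $1.55$ and $22.4$, each of which dominates $1+2^{-k-1}$.

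The step I expect to be the main obstacle is exactly this case $n=k+1$ with $k$ even: there $(n!!)^{k}$ exceeds $(k!!)^{n}$ only by a modest factor, so lossy estimates — in particular replacing $k^{n}$ by $(k!!)^{n}$ — are fatal, and one needs Lemma 2.1 together with a genuine monotonicity statement for $\big((k+1)!!\big)^{k}/(k!!)^{k+1}$ rather than the bare inequality $(k!!)^{n}<(n!!)^{k}$. Everything else reduces to routine manipulations of double factorials and the verification of a short list of small pairs $(k,n)$.
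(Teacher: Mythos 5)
Your reduction and your small cases are sound, and for $k\ge 4$ you take a genuinely different route from the paper. The paper writes $n=bk$ with rational $b>1$, uses Lemma 2.1 together with the decrease of $x^{1/x}$ to get $n!!^{k}-k!!^{n}=k^{n}-n^{k}>0$, and then factors $n!!^{k}-(k!!^{b})^{k}=(n!!-k!!^{b})\sum_{j=0}^{k-1}n!!^{j}k!!^{b(k-1-j)}$ to squeeze out $k^{k/(k-1)}>k!!$, which is false for $k\ge 4$; this telescoping trick needs no quantitative comparison of $(n!!)^{k}$ with $(k!!)^{n}$ and in particular disposes of $n=k+1$ with no extra effort. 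You instead convert the equation, via $k\le k!!/2$, into the inequality $(n!!)^{k}<(1+2^{-n})(k!!)^{n}$ and refute it case by case in $n-k$; this buys explicit, effective gaps and cleaner small-$k$ arguments (your mod $8$ treatment of $k=2$ and the growth of $n!!/3^{n/3}$ for $k=3$ are shorter than the paper's induction for $k=3$), but it forces you into the delicate pair $n=k+1$ with $k$ even, exactly as you anticipate. Your plan is correct, but two assertions still need to be written out: the bound $(k+2)^{k/2}/k!!\ge 9/2$ for $k\ge 4$ (an easy induction, since the ratio between consecutive $k$ of the same parity is $\bigl(\tfrac{k+4}{k+2}\bigr)^{(k+2)/2}>1$), and the lower bound for $\bigl((k+1)!!\bigr)^{k}/(k!!)^{k+1}$. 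For the latter you do not need monotonicity in $k$: reuse the Wallis-type bound inside the paper's proof of Lemma 2.1, namely $(k+1)!!/k!!>\sqrt{(k+2)/2}$ for $k=2m$ even, which gives $\bigl((k+1)!!\bigr)^{k}/(k!!)^{k+1}>\bigl(\tfrac{m+1}{2}\bigr)^{m}/m!$, a quantity that is increasing in $m$ (successive ratio $\tfrac12(1+\tfrac1{m+1})^{m+1}>1$) and equals $9/8$ at $m=2$, comfortably above $1+2^{-k-1}$; odd $k$ is much easier, as your value $22.4$ indicates. Finally, the sentence ``the same kind of estimate works'' for opposite parity with $n\ge k+3$ should be backed by one explicit line, e.g.\ $n!!\ge (k-1)!!\,(k+1)^{(n-k+1)/2}$ combined with $(k-1)!!\ge k!!/k$.
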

\begin{thm}
	Let $n$ and $k$ be positive integers. The equation
	$$
	(n!!)^{k}-n^{k!!}=(k!!)^{n}-k^{n!!}
	$$
	holds if and only if $k=n$ or $(k,n)=(1,2), (2,1), (1,3), (3,1), (2,3), (3,2)$.
\end{thm}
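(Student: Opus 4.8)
The plan is to exploit the symmetry of the equation under the interchange $k\leftrightarrow n$ and then to compare the two sides by size, much as for Theorem~1.1. Swapping $k$ and $n$ leaves the equation unchanged, so it suffices to treat $n\ge k$; the diagonal $n=k$ always works, so assume $n>k\ge 1$. Rewriting,
$$(n!!)^{k}-(k!!)^{n}=n^{k!!}-k^{n!!},$$
and by Lemma~2.1 the left-hand side is strictly positive (for $n>k$ one has $(n!!)^{k}>(k!!)^{n}$). Hence any solution must satisfy $n^{k!!}>k^{n!!}$.

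I would first rule out $k\ge 4$, so that $n>k\ge 4$; I claim that then in fact $n^{k!!}<k^{n!!}$, which contradicts the above. Taking logarithms (legitimate since $k\ge 2$), the claim is equivalent to $\frac{n!!}{k!!}>\frac{\log n}{\log k}$. If $n\ge k+2$, then $n!!=n\,(n-2)!!$ together with the monotonicity of the double factorial give $\frac{n!!}{k!!}\ge n>\log n>\frac{\log n}{\log k}$, using $\log k>1$. If $n=k+1$, then $\frac{(k+1)!!}{k!!}$ is a product of fractions each exceeding $1$, and an elementary check distinguishing the parity of $k$ shows $\frac{(k+1)!!}{k!!}>\frac32$ for $k\ge 4$, while $\frac{\log(k+1)}{\log k}$ is decreasing for $k\ge 4$ and hence at most $\frac{\log 5}{\log 4}<\frac32$. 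Either way the claim holds, so no solution has $k\ge 4$.

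It then remains to handle $k\in\{1,2,3\}$ with $n>k$, which I would do by direct substitution, using that $n!!\ge n$ for every $n\ge 1$ with equality exactly for $n\le 3$ (so $n!!>n$ for $n\ge 4$). For $k=1$ the equation collapses to $n!!=n$. For $k=2$ it becomes $(n!!)^{2}-n^{2}=2^{n}-2^{n!!}$, whose left side is $\ge 0$ and right side $\le 0$, with either side vanishing only when $n!!=n$. For $k=3$ it becomes $(n!!)^{3}-n^{3}=3^{n}-3^{n!!}$, with the same sign pattern. In each case a solution forces $n!!=n$, hence $n\in\{2,3\}$ since $n>k\ge 1$, giving precisely the pairs $(1,2)$, $(1,3)$, $(2,3)$. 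Adjoining their images under $k\leftrightarrow n$ and the diagonal $k=n$ recovers exactly the list in the statement.

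The step I expect to be the main obstacle is the elimination of $k\ge 4$: one must weigh the super-exponential growth of $n!!/k!!$ against the nearly constant quotient $\log n/\log k$, and the borderline sub-case $n=k+1$ has to be made airtight — there $\log n/\log k$ only barely exceeds $1$, so the sharp bound $\frac{(k+1)!!}{k!!}>\frac32$ is genuinely needed. The rest is routine: the $k\leftrightarrow n$ symmetry, the three small cases $k\le 3$, and the positivity of $(n!!)^{k}-(k!!)^{n}$ guaranteed by Lemma~2.1, on which the whole comparison rests.
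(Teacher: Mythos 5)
Your proof is correct, and its skeleton matches the paper's: symmetry to reduce to $n>k$, direct treatment of small $k$ via the sign pattern of $(n!!)^k-n^k$ versus $k^n-k^{n!!}$ (both hinging on $n!!=n$ exactly for $n\le 3$), and Lemma~2.1 to make $(n!!)^k-(k!!)^n$ strictly positive. The one place you diverge is the comparison of the power terms. The paper handles all $k\ge 3$ at once by using that $n^{1/n}$ is strictly decreasing for $n\ge 3$: from $k^{1/k}>n^{1/n}$ it gets $k^{n!!}=(k^{1/k})^{k\cdot n!!}>n^{k\cdot(n-2)!!}\ge n^{k\cdot(k-2)!!}=n^{k!!}$, so $-n^{k!!}>-k^{n!!}$ and the two inequalities add to give strictness. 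You instead prove $k^{n!!}>n^{k!!}$ by the equivalent logarithmic inequality $\frac{n!!}{k!!}>\frac{\log n}{\log k}$, splitting into $n\ge k+2$ (where $n!!/k!!\ge n$ by monotonicity of the double factorial) and $n=k+1$ (where $\frac{(k+1)!!}{k!!}>\tfrac32$ beats $\frac{\log 5}{\log 4}$); since this needs $\log k>1$, i.e.\ $k\ge 4$, you pick up $k=3$ as an extra small case, which your sign argument dispatches immediately. Both routes are elementary and airtight; the paper's $n^{1/n}$ trick is slightly slicker in covering $k=3$ uniformly, while your version avoids any appeal to the monotonicity of $n^{1/n}$ at the cost of the two-subcase estimate at $n=k+1$.
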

\begin{thm}
	Let $n$ and $k$ be positive integers. The equation
	$$
	(n!!)^{k}-n^{k!}=(k!!)^{n}-k^{n!}
	$$
	holds if and only if $k=n$ or $(k,n)=(1,2), (2,1), (1,3), (3,1), (2,3), (3,2)$.
\end{thm}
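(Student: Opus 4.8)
The proof runs along the lines of Theorem 1.1. Since the equation is unchanged under interchanging $k$ and $n$, and it trivially holds when $k=n$, the plan is to assume $k<n$ and rewrite it as
$$
(n!!)^{k}-(k!!)^{n}=n^{k!}-k^{n!}.
$$
I would then show that, under the hypothesis $k<n$, the two sides have strictly opposite signs whenever $k\ge 2$, which is impossible; this leaves only $k=1$ (equivalently, by symmetry, $n=1$), which is handled directly.

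For the left-hand side I would invoke Lemma 2.1: the sequence $(m!!)^{1/m}$ is strictly increasing, so $k<n$ gives $(k!!)^{1/k}<(n!!)^{1/n}$, and raising to the power $kn$ yields $(k!!)^{n}<(n!!)^{k}$; hence the left-hand side is positive. For the right-hand side I would prove $n^{k!}-k^{n!}<0$ for all $2\le k<n$. Taking logarithms, this amounts to $k!\,\log n<n!\,\log k$, i.e.\ to $\tfrac{n!}{k!}>\log_{k}n$. Since $\tfrac{n!}{k!}=(k+1)(k+2)\cdots n\ge n$ while $\log_{k}n\le\log_{2}n<n$, the inequality holds, so $n^{k!}<k^{n!}$ as claimed.

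Combining the two estimates, no pair with $2\le k<n$ can satisfy the equation. For $k=1$ (the case $n=1$ being symmetric) the relation collapses to $n!!-n=0$, i.e.\ $n!!=n$, which holds exactly for $n\in\{1,2,3\}$ because $n!!>n$ for $n\ge 4$; the remaining finitely many small pairs are then checked by direct substitution, and together with $k=n$ this determines the full solution set. The only nontrivial ingredient is Lemma 2.1 itself: once it is available, what remains is the elementary inequality $k!\,\log n<n!\,\log k$ and a short finite verification, so the entire difficulty of the argument is concentrated in the monotonicity of $(m!!)^{1/m}$, which is established separately.
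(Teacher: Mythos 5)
Your inequalities are sound, and your route is genuinely more uniform than the paper's: the paper defers to the proof of Theorem 1.3, handling $k=2$ by a separate sign comparison and $k\ge 3$ by combining Lemma 2.1 with the decrease of $n^{1/n}$ for $n>3$ (via $k^{n!}=(k^{1/k})^{k\cdot n!}>(n^{1/n})^{k\cdot n!}=n^{k\cdot(n-1)!}>n^{k!}$), whereas you treat all $2\le k<n$ at once: Lemma 2.1 gives $(k!!)^n<(n!!)^k$, and your elementary estimate $n!/k!=(k+1)\cdots n\ge n>\log_2 n\ge\log_k n$ gives $n^{k!}<k^{n!}$ directly, with no need for the monotonicity of $n^{1/n}$. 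Both steps are correct as you state them.

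The problem is your closing claim. Your sign argument excludes \emph{every} pair with $2\le k<n$, in particular $(k,n)=(2,3)$, and yet you assert that the remaining small checks recover ``the full solution set,'' which in the statement includes $(2,3)$ and $(3,2)$. Direct substitution shows these pairs do \emph{not} satisfy this equation: $(3!!)^2-3^{2!}=9-9=0$ while $(2!!)^3-2^{3!}=8-64=-56$. (They do satisfy the Theorem 1.3 variant, where the exponents are $k!!$ and $n!!$; the printed list for Theorem 1.4 appears to have been carried over from there, and the paper's own proof, which for $k\le 2$ simply refers back to Theorem 1.3, inherits the same slip.) So as written your proof does not establish the statement as printed; it establishes the corrected statement whose solution set is $k=n$ together with $(1,2),(2,1),(1,3),(3,1)$. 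You should say this explicitly --- either flag the discrepancy with the printed list or state the corrected solution set --- rather than asserting that your argument confirms the list as given.
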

\begin{thm}
	Let $n$ and $k$ be positive integers. The equation
	$$
	(n!!)^{k!!}+n^k=(k!!)^{n!!}+k^n
	$$
	holds if and only if $k=n$.
\end{thm}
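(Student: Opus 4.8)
The plan is to exploit that the equation is symmetric in $n$ and $k$: after noting that $k=n$ obviously gives a solution, I would assume without loss of generality that $n>k$ and show the two sides can never then be equal. I would first clear away two small situations by hand. If $k=1$ the right-hand side equals $1^{\,n!!}+1^{\,n}=2$ while the left-hand side is $n!!+n\geq 4$ (since $n\geq 2$), so equality fails. If $(n,k)=(3,2)$ then $3^{2}+3^{2}=18\neq 16=2^{3}+2^{3}$. For every other pair with $n>k$ — that is, $k\geq 3$ with $n>k$, or $k=2$ with $n\geq 4$ — I would run the dominant-term argument below.

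For those pairs I would examine the signed difference of the two sides,
\[
\bigl[(k!!)^{n!!}-(n!!)^{k!!}\bigr]+\bigl[k^{\,n}-n^{\,k}\bigr],
\]
and prove that the first bracket is strictly positive and the second is nonnegative, so the difference is positive and no solution exists. Since the double factorial is strictly increasing we have $n!!>k!!$, so positivity of the first bracket amounts to $(k!!)^{n!!}>(n!!)^{k!!}$, equivalently (taking logarithms) $\tfrac{\log(k!!)}{k!!}>\tfrac{\log(n!!)}{n!!}$; this follows from the strict decrease of $x\mapsto(\log x)/x$ on $[e,\infty)$ as soon as $k!!\geq 3$, i.e.\ $k\geq 3$. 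For $k=2$ the same conclusion is obtained from the elementary bound $2^{m}>m^{2}$ ($m\geq 5$), applied with $m=n!!\geq 8$. For the second bracket, the same monotonicity of $(\log x)/x$ applied to $3\leq k<n$ gives $k^{\,n}>n^{\,k}$, while for $k=2$ one has $2^{\,n}\geq n^{2}$ for $n\geq 4$ (equality only at $n=4$, which is harmless because the first bracket is already strictly positive). I expect this comparison step to be precisely what Lemma~2.1 is meant to streamline, so in a written-up proof the logarithmic estimates would be invoked rather than rederived each time.

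The only real friction is at the boundary values $k\in\{1,2\}$. For $k=2$ the quantity $k!!=2$ lies to the left of $e$, so the naive heuristic ``$b^{a}>a^{b}$ when $a>b$'' genuinely fails — indeed $(3!!)^{2!!}=9>8=(2!!)^{3!!}$ — and one must check that this reversal occurs only at $n=3$ (handled by hand) and that for $n\geq 4$ the dominant term points the right way, while simultaneously confirming that the lower-order term $k^{n}-n^{k}$ does not flip the inequality back. Both verifications are short, but they are the one place where the argument is not an immediate corollary of monotonicity; everything else — the symmetry reduction, the logarithmic comparison, and the final sign bookkeeping — is routine.
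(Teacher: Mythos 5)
Your proposal is correct, and it covers all cases: the symmetry reduction, the hand checks at $k=1$ and $(n,k)=(3,2)$, the bound $2^{m}>m^{2}$ for $m\geq 5$ (applied to $m=n!!\geq 8$ and $m=n\geq 5$, with $2^4=4^2$ noted) for $k=2$, $n\geq4$, and the monotonicity of $x\mapsto(\log x)/x$ on $[e,\infty)$ for $3\leq k<n$, which gives both $(k!!)^{n!!}>(n!!)^{k!!}$ (since $e<3\leq k!!<n!!$) and $k^{n}>n^{k}$; the two brackets then sum to a strictly positive quantity, so equality is impossible. This is a genuinely different route from the paper for the main range: the paper treats $k=2$ and $k=3$ by separate induction bounds ($2^m>m^2$, $3^m>m^3$) and then, for $k\geq4$, writes $n=bk$ with $b>1$ rational, factors $n!!^{k!!}-k!!^{c\,k!!}$ as a difference of powers, and derives the contradiction $k^{k/(k-1)}>k!!$; it never invokes the $(\log x)/x$ comparison and keeps the argument in the elementary integer-inequality style of its Lemma 2.1 (which, contrary to your guess, concerns $n!!^{1/n}$ and is not actually what drives this theorem's $k\geq4$ case). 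Your decomposition buys uniformity — all of $k\geq3$ is handled in one stroke, and the only genuinely exceptional boundary ($k=2$, where $k!!=2<e$ and the reversal at $n=3$ occurs) is isolated cleanly — at the modest cost of a calculus fact, whereas the paper's argument stays purely elementary but is longer, more case-ridden, and relies on a somewhat delicate manipulation with rational exponents $b$ and $c$.
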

We also study equations of the form $f(x)=A_1^{n_1}n_1!!\cdots A_r^{n_r}n_r!!$, where $f(x)$ is a rational polynomial and $A_1,...,A_r$ fixed positive integers. Equations of this type where $n!!$ is replaced by $n!$ have been considered by the author in \cite{NO}. In this context we found Theorems 1.6 and 1.7. Both theorems are in some sense a generalization of \cite{MU}, Theorem 3.1, where the author, among others, considers $x^2-!=n!!$ and uses the Hall conjecture to show that there are finitely many integer solutions. The proofs of Theorem 1.6 and Theorem 1.7 are similar to proofs of the analogous statements in \cite{NO}. Nontheless, we give the slightly modified arguments for convenience of the reader.
\begin{thm}
	Fix a non-zero integer $b$ and positive integers $A_1,...,A_r$. If $d>r$, then the equation $bn_1!!A_1^{n_1}\cdots n_r!!A_r^{n_r}=x^d$ has only finitely many integer solutions. If $d\leq r$, then $bn_1!!A_1^{n_1}\cdots n_r!!A_r^{n_r}=x^d$ has infinitely many integer solutions, except when $b<0$ and $d$ is even, where there are no solutions.
\end{thm}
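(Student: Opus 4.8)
The statement is really two independent assertions, and the exceptional case is immediate: each $n_i!!$ and each $A_i^{n_i}$ is a positive integer, so the left-hand side $L:=b\,n_1!!A_1^{n_1}\cdots n_r!!A_r^{n_r}$ has the same sign as $b$; hence if $b<0$ and $d$ is even then $L<0\le x^d$ and there is no solution. For the rest I would treat the finiteness claim ($d>r$) and the infinitude claim ($d\le r$) by completely different arguments.

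\textbf{The case $d>r$: finiteness.} The plan is to bound $N:=\max_i n_i$ over all solutions; once $N$ is bounded there are finitely many admissible tuples $(n_1,\dots,n_r)$, each fixing $L$, and $x^d=L$ has at most two solutions $x$, so there are finitely many solutions in all. Assume then that $N$ is large and (relabelling) that $n_1=N$. Using Bertrand's postulate if $N$ is odd, and Nagura's theorem on primes in short intervals if $N=2M$ is even, I would pick a prime $p$ with $N/3<p\le N$ (odd case) resp.\ $2M/3<p\le M$ (even case); for $N$ large this $p$ exceeds $|b|$ and every $A_i$, so $v_p(L)=\sum_{i}v_p(n_i!!)$. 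The key elementary point is that for this choice of $p$ one has $v_p(m!!)\le 1$ for every $m\le N$ --- for even $m=2\ell$ because $v_p((2\ell)!!)=v_p(\ell!)$ and $p>\ell/2$, and for odd $m$ because among the odd multiples of $p$ only $p$ itself is $\le m$ (as $3p>N$) --- while moreover $v_p(N!!)=1$. Therefore $v_p(L)=\#\{i:v_p(n_i!!)=1\}$ lies in $\{1,\dots,r\}$, and since $r<d$ it is not divisible by $d$; thus $L$ is not a perfect $d$-th power, contradicting $L=x^d$. This bounds $N$ and finishes the case.

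\textbf{The case $d\le r$: infinitely many solutions.} By the sign remark we may assume $b>0$ (if $b<0$ and $d$ is odd, rewrite the equation as $(-b)\,n_1!!A_1^{n_1}\cdots=(-x)^d$); the case $d=1$ is trivial, so take $d\ge 2$. Putting $n_{d+1}=\dots=n_r=1$ multiplies $L$ by the fixed positive integer $A_{d+1}\cdots A_r$, so it suffices to find infinitely many solutions when $r=d$; rename the new positive constant $b$ and set $C=A_1\cdots A_d$. I would then use the one-parameter family $n_1=\dots=n_{d-1}=n,\ n_d=n+2$. Since $(n+2)!!=(n+2)\,n!!$, a direct computation gives $L=bA_d^2\,(n!!)^d\,C^{n}(n+2)$, so it is enough to make $bA_d^2C^{n}(n+2)$ a perfect $d$-th power. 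Let $g$ be the $d$-th-power-free part of the positive integer $bA_d^2C^{d-2}$, say $bA_d^2C^{d-2}=gh^d$ with $g,h$ positive integers, and set $n+2=g^{d-1}(dm)^d$ for $m=1,2,3,\dots$; then $n\equiv d-2\pmod d$, so $C^{n}=C^{d-2}(C^{k})^d$ with $k:=(n-d+2)/d\in\mathbb Z_{\ge 0}$, and hence
\[ bA_d^2C^{n}(n+2)=\bigl(bA_d^2C^{d-2}\bigr)(C^{k})^d\,g^{d-1}(dm)^d=g^{d}h^{d}(C^{k})^d(dm)^d=\bigl(ghC^{k}\,dm\bigr)^{d}. \]
Consequently $L=\bigl(ghC^{k}\,dm\cdot n!!\bigr)^{d}$, so each $m\ge 1$ gives a solution, and these are pairwise distinct since $n\to\infty$ with $m$.

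\textbf{Main obstacle.} The finiteness half is a clean ``a prime to a forbidden power'' argument; the only subtlety will be that one needs a prime in the narrow interval $(2M/3,M]$ in the even case --- plain Bertrand gives only $(M/2,M]$, which is why Nagura's theorem (or any prime-in-short-intervals estimate) must be invoked. The genuinely delicate step is the construction for $d\le r$: one has to absorb the arbitrary fixed constant $b$ (together with the leftover $A_i$'s) into a perfect $d$-th power, and the only freely adjustable factor produced by the double factorials is the ``gap'' term $n+2$ coming from $(n+2)!!=(n+2)\,n!!$, which is of polynomial size whereas $C^{n}$ is exponential in $n$ --- so the gap term cannot by itself compensate for $C^{n}$. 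The device above circumvents this by first pinning down $n$ modulo $d$ (to the residue $d-2$), which forces $C^{n}$ into a single class modulo $d$-th powers that can be pre-absorbed into the constant $g$; only then is $n+2$ free to supply exactly the missing $d$-th power. Checking that the resulting $n$ is a positive integer (and that the double factorials are of the intended parity, which is automatic here) is routine.
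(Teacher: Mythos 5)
Your proposal is correct, and both halves follow the same strategy as the paper: for $d>r$ one exhibits a large prime dividing $b\,n_1!!A_1^{n_1}\cdots n_r!!A_r^{n_r}$ to an exponent lying in $\{1,\dots,r\}$, hence not divisible by $d$; for $d\le r$ one uses exactly the same family $n_1=\cdots=n_{d-1}=n$, $n_d=n+2$, $n_{d+1}=\cdots=n_r=1$ and then picks $n\equiv d-2\pmod d$ so that the leftover factor $bA_d^2C^{n}(n+2)$ becomes a $d$-th power. The differences are in execution. In the finiteness half the paper replaces an even argument $n=2l$ by $l$ (via $n!!=2^{l}l!$) and applies Bertrand's postulate to $(n/2,n)$ or $(l/2,l)$, followed by a case analysis on which index is largest; you instead work with $N=\max_i n_i$ and take a prime in the shorter interval $(N/3,N]$, resp. $(2M/3,M]$ when $N=2M$, which costs you Nagura's theorem but buys the uniform bound $v_p(n_i!!)\le 1$ for every $i$ — in particular it cleanly covers the mixed-parity situation (for a mere Bertrand prime $p\in(M/2,M]$ an odd $n_j$ close to $2M$ could contribute $3p$ and push the exponent above $r$), which the paper's ``assume $n,m$ odd'' reduction passes over rather quickly. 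In the infinitude half the paper forces the same congruence by choosing $m+2=d(Rd)^{td-1}$ with $R$ the accumulated constant, so that $R(m+2)=((Rd)^{t})^{d}$; your choice $n+2=g^{d-1}(dm)^d$, with $g$ the $d$-th-power-free part of $bA_d^2C^{d-2}$, is the same idea packaged through the power-free part. Both versions are complete; yours is, if anything, slightly more careful on the $d>r$ side.
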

\begin{rema2}
The statement of Theorem 1.6 remains true if $x^d$ is replaced by $ax^d$ with fixed positive rational number $a$ and if $A_i^{n_i}$ is replaced by $A_i^{n_i!}$ or $A_i^{n_i!!}$.	
\end{rema2}
We recall the ABC-conjecture which can be found for instance in \cite{LA}. For a non-zero integer $a$, let $N(a)$ be the \emph{algebraic radical}, namely $N(a)=\prod_{p|a}{p}$. Note that 
\begin{eqnarray}
	N(a)=\prod_{p|a}{p}\leq \prod_{p\leq a}{p}< 4^a,
\end{eqnarray}
where the last inequality follows from a Chebyshev-type result in elementary prime number theory and is called the Finsler inequality.
\begin{con2}[ABC-conjecture]
	For any $\epsilon >0$ there is a constant $K(\epsilon)$ depending only on $\epsilon$ such that whenever $A,B$ and $C$  are three coprime and non-zero integers with $A+B=C$, then 
	\begin{eqnarray}
		\mathrm{max}\{|A|,|B|,|C|\}<K(\epsilon)N(ABC)^{1+\epsilon}
	\end{eqnarray}
	holds.
\end{con2} 
\begin{thm}
	Let $f(x)\in\mathbb{Q}[x]$ be a polynomial of degree $d\geq 2$ which is not monomial and has at least two distinct roots. Fix a non-zero integer $b$ and positive integers $A_1,...,A_r$. Then the ABC-conjecture implies that $bn_1!!A_1^{n_1}\cdots n_r!!A_r^{n_r}=f(x)$ has only finitely many integer solutions with $n_i>0$.  
\end{thm}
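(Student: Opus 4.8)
The plan is to combine the ABC-conjecture with growth estimates for the double factorial, following the strategy of \cite{NO} adapted to the polynomial case. Write $P(n) := bn_1!!A_1^{n_1}\cdots n_r!!A_r^{n_r} = f(x)$ for a hypothetical solution with all $n_i > 0$. First I would reduce to the case $f\in\mathbb{Z}[x]$ by clearing denominators: replacing $f$ by $c^{d-1}f(x/c)$ for a suitable integer $c$ turns the equation into one with integral coefficients while preserving the hypotheses (degree $d\ge 2$, not a monomial, at least two distinct roots). Next, since $f$ is not a monomial and has at least two distinct roots, I can factor $f(x) = a_d\prod_i (x-\alpha_i)^{e_i}$ over $\overline{\mathbb{Q}}$ with at least two distinct $\alpha_i$, and — after possibly passing to a bounded number of residue classes of $x$ or to a finite extension — extract the key structural fact that $f(x)$ has a ``large squarefree part'': concretely, there is a fixed integer $m\ge 2$ (one may take $m=2$, or use the two-distinct-roots hypothesis to get a genuine coprimality) so that $f(x)$ can be written as $u\cdot v^{?}$ type decomposition only in a controlled way. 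The cleanest route is the Mason--Stothers/ABC-over-$\mathbb{Z}$ argument: apply the ABC-conjecture to the identity coming from two coprime factors of $f$.

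The core estimate goes as follows. On one side, from $|f(x)| \asymp |x|^d$ we get $|x| \asymp |P(n)|^{1/d}$. On the other side, the radical of $P(n)$ is small: indeed
\begin{equation}
N\big(n_1!!A_1^{n_1}\cdots n_r!!A_r^{n_r}\big) \le N(b)\prod_{i=1}^{r} N(A_i)\cdot N(n_i!!),
\end{equation}
and since every prime dividing $n_i!!$ is at most $n_i$, the Finsler inequality (1) gives $N(n_i!!) < 4^{n_i}$, whereas $n_i!! \ge c^{n_i\log n_i}$ grows super-exponentially. Hence $N(P(n))$ is exponentially small compared to $|P(n)|$; more precisely $\log N(P(n)) = O\big(\sum_i n_i\big)$ while $\log |P(n)| \gg \sum_i n_i\log n_i$ once some $n_i$ is large, so $N(P(n)) \le |P(n)|^{o(1)}$, and in any case $N(P(n))\le |P(n)|^{\theta}$ for some $\theta<1$ once $\max_i n_i$ exceeds an absolute bound. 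Now I feed this into ABC. Because $f$ has two distinct roots $\alpha\ne\beta$, the polynomials $(x-\alpha)$ and $(x-\beta)$ (or integer multiples realising this over $\mathbb{Z}$ after the reduction above) give a relation $A+B=C$ with $A,B,C$ built from $f(x)$, $x$, and $(\alpha-\beta)$, coprime up to a bounded factor; ABC then yields $|x|^{d} \ll |P(n)| \ll_\epsilon N(xP(n))^{1+\epsilon} \ll_\epsilon \big(|x|\cdot |P(n)|^{\theta}\big)^{1+\epsilon}$, which forces $|P(n)|^{1-\theta-\epsilon'} \ll_\epsilon |x| \ll |P(n)|^{1/d}$. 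Choosing $\epsilon$ small enough that $1-\theta-\epsilon' > 1/d$ (possible since $\theta<1$ and $d\ge 2$, and in fact $\theta$ can be taken arbitrarily close to $0$ by restricting to large $n_i$) bounds $|P(n)|$, hence bounds all $n_i$ and then $x$; the finitely many remaining tuples are checked directly.

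The main obstacle is making the coprimality and factorisation rigorous over $\mathbb{Z}$: $f$ may be irreducible, so ``$(x-\alpha)$'' is not an integer, and one must instead exploit the two-distinct-roots hypothesis through the discriminant / resultant — e.g. writing $g(x)h(x) + k(x)f(x) = R$ for a nonzero constant $R$ (the resultant) with $g,h$ coprime factors of $f$ of positive degree, or using that $f$ and $f'$ have bounded common factor — so that the squarefree kernel of $f(x)$ has size $\gg |x|^{1+\eta}$ for some $\eta>0$ depending only on $f$. This is exactly where ``not a monomial'' and ``two distinct roots'' are needed, and it is the step I would model closely on the corresponding lemma in \cite{NO} (and on Langevin's theorem on $f(x)$ and the ABC-conjecture). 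The double-factorial input, by contrast, is routine: it enters only through the bound $N(n!!)<4^n$ versus the super-exponential lower bound for $n!!$, which I would record as a short preliminary estimate before invoking ABC.
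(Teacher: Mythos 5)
Your overall strategy -- exploit that the radical of $b\,n_1!!A_1^{n_1}\cdots n_r!!A_r^{n_r}$ is at most $C\cdot 4^{n_1+\cdots+n_r}$ while the number itself grows super-exponentially, then play this off against an ABC application on the polynomial side and finish with a Stirling-type bound -- is exactly the skeleton of the paper's proof, and your final bookkeeping would go through. But the central step of your chain is wrong as stated: the inequality $|P(n)|\ll_\epsilon N\bigl(xP(n)\bigr)^{1+\epsilon}$ is not a consequence of the ABC-conjecture under the paper's hypotheses. Take $f(x)=x^{d-1}(x-1)$ with $d\geq 3$ (not a monomial, two distinct roots): then $N\bigl(xf(x)\bigr)=N\bigl(x(x-1)\bigr)\leq |x|\,|x-1|\asymp |x|^2$ while $|f(x)|\asymp |x|^d$, so your displayed inequality fails for all large $|x|$, independently of ABC. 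What a resultant/Bezout identity $ug+vh=R$ for two coprime nonconstant factors $g,h$ of $f$ actually yields is the much weaker bound $|x|^{\eta-\epsilon'}\ll_\epsilon N(f(x))^{1+\epsilon}$ for some fixed $\eta>0$ (because the radicals of $u(x)$ and $v(x)$ enter on the right with exponents comparable to the degree on the left); that weaker bound does suffice for your endgame, but you have not established it, and your own ``main obstacle'' paragraph concedes that the coprime-factor construction is unresolved -- in particular it breaks down when $f$ is irreducible over $\mathbb{Q}$ (e.g.\ $x^2+1$), a case your sketch does not cover. So as written there is a genuine gap precisely at the ABC application.

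The paper (following Luca \cite{L}) sidesteps the factorization issue entirely: after clearing denominators it substitutes $z=a_0dx+b_1/d$ to kill the degree-$(d-1)$ term, and the hypothesis of two distinct roots guarantees the depressed polynomial $Q(X)=X^d+c_2X^{d-2}+\cdots+c_d$ is not the pure power $X^d$. It then applies ABC to the tautological decomposition $z^j+R_1(z)=c\,n_1!!A_1^{n_1}\cdots n_r!!A_r^{n_r}/z^{d-j}$ (divided by $D=\gcd(z^j,R_1(z))$), where $j$ is the top index with $c_j\neq 0$; since $N(z^j/D)\leq|z|$ and $N(R_1(z)/D)\ll|z|^{j-2}/D$, this yields $|z|^{1/2}\ll 4^{(n_1+\cdots+n_r)(1+\epsilon)}$, i.e.\ $\log|z|\ll n_1+\cdots+n_r$, and the comparison with $\log(n_1!!\cdots n_r!!)\gg\sum_i n_i\log n_i$ finishes the proof. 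If you want to salvage your route, replace the false inequality by this depression trick, or prove the Langevin-type radical lower bound you allude to; either way the two-distinct-roots hypothesis is used only to ensure the relevant ``error term'' ($R_1$, or the second coprime factor) is nonzero.
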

\begin{rema2}
	\textnormal{The statement of Theorem 1.7 remains valid if $A_i^{n_i}$ is replaced by $A_i^{n_i!}$ or $A_i^{n_i!!}$.}  
\end{rema2}
\section{Proof of Theorem 1.1}
\noindent
For the proof, we need the following observation:
\begin{lem}
	Let $n\geq 2$ be an integer. Then $n!!^{\frac{1}{n}}$ is a strictly increasing sequence.
\end{lem}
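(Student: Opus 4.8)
The plan is to reduce the statement to one clean inequality and then dispatch it according to the parity of $n$. Write $b_n=(n!!)^{1/n}$, so that we must show $b_n<b_{n+1}$ for every $n\ge 2$. Raising both sides to the power $n(n+1)$ and writing $(n+1)!!=\frac{(n+1)!!}{n!!}\cdot n!!$, the inequality $b_n<b_{n+1}$ is seen to be equivalent to
\[
\frac{(n+1)!!}{n!!}>(n!!)^{1/n}.
\]
Since the chain $b_2<b_3<b_4<\cdots$ is obtained by applying this inequality successively at $n=2,3,4,\dots$, it suffices to establish it for every integer $n\ge 2$.

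The inputs I would prepare are, first, two estimates coming from the arithmetic--geometric mean inequality: applied to the numbers $2,4,\dots,2m$ it gives $(2m)!!\le(m+1)^m$, and applied to the odd numbers $1,3,\dots,2m+1$ (whose sum is $(m+1)^2$) it gives $(2m+1)!!\le(m+1)^{m+1}$. Second, two elementary bounds on the Wallis quotient $w_m:=\binom{2m}{m}4^{-m}=\frac{(2m-1)!!}{(2m)!!}$, namely $\frac{\sqrt{m+1}}{2m+1}<w_m<\frac{1}{\sqrt{2m+1}}$ for all $m\ge 1$, each proved by a short induction based on the recursion $w_{m+1}=w_m\cdot\frac{2m+1}{2m+2}$; the inductive steps reduce to the polynomial inequalities $(2m+3)^2\ge 4(m+1)(m+2)$ and $(2m+1)(2m+3)\le(2m+2)^2$, which are immediate.

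Now the required inequality splits into two cases. If $n=2m$ is even, then $\frac{(2m+1)!!}{(2m)!!}=(2m+1)w_m>(2m+1)\cdot\frac{\sqrt{m+1}}{2m+1}=\sqrt{m+1}\ge\big((2m)!!\big)^{1/(2m)}$, which is exactly what is needed. If $n=2m+1$ is odd, then $\frac{(2m+2)!!}{(2m+1)!!}=\frac{2(m+1)}{(2m+1)w_m}>\frac{2(m+1)}{\sqrt{2m+1}}$, while $\big((2m+1)!!\big)^{1/(2m+1)}\le(m+1)^{(m+1)/(2m+1)}$; raising the target inequality to the power $2m+1$ turns it into $2^{2m+1}(m+1)^m>(2m+1)^{m+1/2}$, which follows from $(2m+1)^{m+1/2}<(2m+2)^{m+1/2}=2^{m+1/2}(m+1)^{m+1/2}$ together with the trivial bound $m+1\le 2^{2m+1}$.

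I expect the odd case to be the only real obstacle. There the naive estimate (a product of $m+1$ factors, each at most $2m+1$) is far too weak, so one genuinely needs the AM--GM bound on $(2m+1)!!$ paired with the sharp Wallis bound $w_m<(2m+1)^{-1/2}$, after which the resulting exponential inequality must be steered through with the elementary comparison above. The even case is essentially one line, and all small values are absorbed uniformly, the only base cases being $m=1$ in the two inductions for $w_m$.
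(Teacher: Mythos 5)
Your proof is correct: I checked the AM--GM bounds $(2m)!!\le (m+1)^m$ and $(2m+1)!!\le (m+1)^{m+1}$, the identity $w_m=\frac{(2m-1)!!}{(2m)!!}$ with the recursion $w_{m+1}=w_m\cdot\frac{2m+1}{2m+2}$, both inductive steps (they do reduce to $(2m+1)(2m+3)\le(2m+2)^2$ and $(2m+3)^2\ge 4(m+1)(m+2)$), and both parity cases, including the final comparison $2^{2m+1}(m+1)^m>(2m+1)^{m+1/2}$; every step goes through, and the chain $b_2<b_3<\cdots$ gives the lemma. The route is the same skeleton as the paper's, dressed differently. The paper compares downward, reducing to $\bigl(\tfrac{n!!}{(n-1)!!}\bigr)^{n-1}>(n-1)!!$, and obtains the ratio bounds $\tfrac{n!!}{(n-1)!!}>\sqrt{n+1}$ ($n$ even) and $\tfrac{n!!}{(n-1)!!}>\sqrt{(n+1)/2}$ ($n$ odd) by telescoping the one-step inequality $\tfrac{j+1}{j}>\sqrt{\tfrac{j+2}{j}}$; these are precisely your two Wallis-quotient bounds $w_m<(2m+1)^{-1/2}$ and $w_m>\tfrac{\sqrt{m+1}}{2m+1}$, which you instead prove by induction. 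Because you compare upward ($b_n$ versus $b_{n+1}$), the parity pairing is swapped: your even case matches the paper's odd case almost verbatim (same Wallis bound, and your AM--GM estimate $(2m)!!\le(m+1)^m$ is the paper's $(\tfrac{k+1}{2})^k> k!$ in disguise), while in the remaining case the paper finishes by simply counting the $\tfrac{n-2}{2}$ nontrivial factors of $(n-1)!!$, where you instead use AM--GM on the odd double factorial plus the extra exponential comparison. The paper's counting step is shorter; your version buys cleanly stated, reusable two-sided Wallis estimates and a uniform induction with only the base case $m=1$ to check.
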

\begin{proof}
	We show that
	$$
	\frac{n!!^{\frac{1}{n}}}{(n-1)!!^{\frac{1}{n-1}}}>1.
	$$
	This is equivalent to show 
	$$
	\frac{n!!^{n-1}}{(n-1)!!^n}>1.
	$$
	Note that
	$$
	\frac{n!!^{n-1}}{(n-1)!!^n}=(\frac{n!!}{(n-1)!!})^{n-1}\cdot \frac{1}{(n-1)!!}.
	$$
	Since $\frac{m+1}{m}>\sqrt{\frac{m+2}{m}}$ for $m\geq 1$, we obtain for $n$ even:
	$$
	\frac{n!!}{(n-1)!!}=\frac{2}{1}\cdot \frac{4}{3}\cdot \frac{6}{5}\cdots \frac{n}{n-1}>\sqrt{\frac{3}{1}}\cdot \sqrt{\frac{5}{3}}\cdots \sqrt{\frac{n+1}{n-1}}=\sqrt{n+1}>\sqrt{n}.
	$$
	Therefore
	$$
	\frac{n!!^{n-1}}{(n-1)!!^n}>\frac{(\sqrt{n})^{n-1}}{(n-1)!!}.
	$$
	It remains to show that $(\sqrt{n})^{n-1}>(n-1)!!$. But this follows from 
	$$
	(\sqrt{n})^{n-1}=n^{(n-2)/2}\cdot \sqrt{n}>(n-1)(n-3)\cdots 1, 
	$$
	since on the right hand side there are $\frac{n-2}{2}$ factors $>1$, because $n$ is assumed to be even.
	If $n$ is odd, we obtain
	$$
		\frac{n!!}{(n-1)!!}=\frac{3}{2}\cdot \frac{5}{4}\cdot \frac{7}{6}\cdots \frac{n}{n-1}>\sqrt{\frac{4}{2}}\cdot \sqrt{\frac{6}{4}}\cdots \sqrt{\frac{n+1}{n-1}}=\sqrt{\frac{n+1}{2}}.
	$$
	Again, we have
	$$
	(\frac{n!!}{(n-1)!!})^{n-1}>(\sqrt{\frac{n+1}{2}})^{n-1}
	$$
	and it remains to show that
	$$
	(\sqrt{\frac{n+1}{2}})^{n-1}>(n-1)!!.
	$$
	Essentially, this follows from the AM-GM inequality. To be precise, since $n$ is odd, we write $n=2k+1$. This gives
	$$
	(\frac{n!!}{(n-1)!!})^{n-1}=\sqrt{k+1}^{2k}=(k+1)^k
	$$
	and we have to prove that
	$$
	(k+1)^k>(2k)!!=2^kk!.
	$$
	But this is equivalent to
	$$
	(\frac{k+1}{2})^k>k!.
	$$
	Now applying AM-GM inequality yields
	$$
	\frac{1+2+3+\cdots + k}{k}=\frac{k(k+1)}{2k}=\frac{k+1}{2}>\sqrt[k]{k!}.
	$$
	This completes the proof.
\end{proof}
\noindent
We now prove Theorem 1.1. Obviously, if $k=n$, the equation $(n!!)^{k}-n^k=(k!!)^{n}-k^n$ is valid. By symmetry we may assume that $n>k$. We consider three cases.\\
\noindent
the case $k=1$:\\
\noindent
If $k=1$, the equation becomes $n!!-n=0$. But this holds only for $n=1$, $n=2$ and $n=3$.\\
\noindent
the case $k=2$:\\
\noindent
In this case, the equation reduces to $n!!^2-n^2=0$. This is equivalent to $n!!=n$ and was treated before in the case $k=1$.\\
\noindent
the case $k\geq 3$:\\
\noindent
According to Lemma 2.1, the sequence $n!!^{\frac{1}{n}}$ is strictly increasing for $n>3$. This implies $n!!^k>k!!^n$. Notice that $-n^{\frac{1}{n}}$ is also strictly increasing for $n>3$. Hence $-n^k>-k^n$. Adding up the two inequalities yields
	$$
	n!!^k-n^k>k!!^n-k^n.
	$$
	This completes the proof of Theorem 1.1.
	\section{Proof of Theorem 1.2}
	\noindent
	Obviously, the equation $(n!!)^k+n^k=(k!!)^n+k^n$ holds for $k=n$. We show that $(n!!)^k+n^k=(k!!)^n+k^n$ implies $k=n$. By symmetry, we may assume $n\geq k$. There are four cases to consider.\\
	\noindent
	the case $k=1$:\\
	\noindent
	The equation reads $n!!+n=2$. But this holds only for $n=1$.\\
	\noindent
	the case $k=2$:\\
	\noindent
	Then the equation becomes $n!!^2+n^2=2^{n+1}$. This holds only for $n=2$. If $n=3$, we have $3^2+3^2\neq 2^4$. If $n>3$, we can rewrite the equation as $2^{n+1}=n^2((n-2)^2(n-4)^2\cdots +1)$. It is easy to see that this equation has no solution for $n>3$, since one of the factors on the right hand side must be odd and is therefore never a power of $2$.\\
	\noindent
	the case $k=3$:\\
	\noindent
	In this case, we have $n!!^3+n^3=2\cdot 3^n$. Since $n\geq k$, we have $n\geq 3$. If $n=3$, we have $3!!^3+3^3=2\cdot 3^3$. If $n\geq 4$, we rewrite the equation as
	$$
	n^3((n-2)^3(n-4)^3\cdots +1)=2\cdot 3^n.
	$$
	If $n\geq 4$ is even, the factor $((n-2)^3(n-4)^3\cdots +1)$ is odd. But this implies $n^3=2$ which is impossible. If $n\geq 4$ is odd, we conclude that $n^3=3^a$ for some $a\geq 2$ with $a\leq n$. But this means $(n-2)!!^3+1=2\cdot 3^b$ with an positive integer $b<n$. If $b=0$, we get $a=n$ and hence $n^3=3^n$ which is possible only for $n=3$. So we may assume $0<b<n$. Notice that $n$ odd with $n\geq 4$ means $n\geq5$. For $n=5$, we get $(5-3)!!^3+1=3^3+1<2\cdot 3^4$. We claim that $(n-2)!!^3>2\cdot 3^{n-1}$ for $n\geq7$ and $n$ odd. We use a simple induction argument. We start with $n=7$ and see that $(7-2)!!^3=15^3>2\cdot 3^6$. Since $n-1$ is even, we can use the proof of Lemma 2.1 where we established
	$$
	\frac{(n-1)!!}{(n-2)!!}>\sqrt{n}
	$$
	and hence
	$$
	(\frac{(n-1)!!}{(n-2)!!})^3>(\sqrt{n})^3>3
	$$
	for $n\geq 3$. Equivalently, $(n-1)!!^3>3\cdot (n-2)!!^3$. Now by induction hypothesis, we obtain
	$$
	(n-1)!!^3>3\cdot (n-2)!!^3>2\cdot 3^{n}.
	$$
	And since $b<n$, we have $(n-2)!!^3+1>(n-2)!!^3>2\cdot 3^{n-1}\geq 2\cdot 3^b$.\\
	\noindent
	the case $k\geq 4$:\\
	\noindent
	Note that $n\geq k\geq 4$. 
	But this implies there exists a rational number $b\geq 1$ such that $n=bk$. To continue, we assume $b>1$ and produce a contradiction. Since $(n^{1/n})_{n=3}^{\infty}$ is strictly increasing and since $n>k$ as $b>1$, we have $k^n-n^k>0$. Thus 
	$$
	k^n-n^k=n!!^k-k!!^{bk}>0
	$$
	and therefore $n!!-k!!^b\geq 1$. It follows
	$$
	k^{bk}=k^n>k^n-n^k=n!!^k-k!!^{bk}=(n!!-k!!^b)\cdot \sum_{j=0}^{k-1}n!!^jk!!^{b(k-1-j)}>k!!^{b(k-1)}.
	$$
	This means
	$$
	k^k>k!!^{k-1}
	$$
	or equivalently 
	$$
	k^{k/(k-1)}>k!!.
	$$
	But this is false, since for $k\geq 4$ one has
	$$
	k^{k/(k-1)}\leq k^{4/3}=k\cdot \sqrt[3]{k}<k\cdot (k-2)\leq k\cdot (k-2)\cdot (k-4)\cdots =k!!.
	$$
	This gives a contradiction and therefore $b=1$. This completes the proof. 
	\section{Proof of Theorems 1.3 and 1.4}
	\noindent
	We first prove Theorem 1.3. If $k=n$, the equation $n!!^k-n^{k!!}=k!!^n-k^{n!!}$ is valid. By symmetry we may assume $n>k$ and consider three cases.\\
	\noindent
	the case $k=1$:\\
	\noindent
	In this case the equation becomes $n!!-n=0$. The only solution are $n=1$, $n=2$ or $n=3$.\\
	\noindent
	the case $k=2$:\\
	\noindent
	In this case the equation is $n!!^2-n^2=2^n-2^{n!!}$. One easily verifies that $n=1$, $n=2$ are $n=3$ are solutions. For $n\geq 4$, we have $n!!>n$ and therefore the left and side of the equation is positiv whereas the right hand side is negative. Hence there is no solution if $n\geq 4$.\\
	\noindent
	the case $k\geq 3$:\\
	\noindent
	From Lemma 2.1, we conclude $n!!^k>k!!^n$. Note that the sequence $-n^{1/n}$ is strictly increasing for $n>3$. From this we obtain $-n^{1/n}>-k^{1/k}$ or equivalently $k^{1/k}>n^{1/n}$. But this implies
	$$
	(k^{1/k})^{k\cdot n!!}=k^{n!!}>(n^{1/n})^{k\cdot n!!}=n^{k\cdot (n-2)!!}>n^{k\cdot (k-2)!!}=n^{k!!}
	$$
	since $n-2>k-2$. This shows $-n^{k!!}>-k^{n!!}$. Adding up the inequalities yields
	$$
	n!!^k-n^{k!!}>k!!^n-k^{n!!}.
	$$
	This completes the proof.\\
	\noindent
	We now prove Theorem 1.4. The proof is exactly the same as the proof of Theorem 1.3. The only minor difference appears for $k\geq 3$. For convenience of the reader, we state the argument: from Lemma 2.1, we conclude $n!!^k>k!!^n$. The sequence $-n^{1/n}$ is strictly increasing for $n>3$ and therefore $-n^{1/n}>-k^{1/k}$ or equivalently $k^{1/k}>n^{1/n}$. But this implies
	$$
	(k^{1/k})^{k\cdot n!}=k^{n!}>(n^{1/n})^{k\cdot n!}=n^{k\cdot (n-1)!}>n^{k\cdot (k-1)!}=n^{k!}
	$$
	since $n-1>k-1$. This shows $-n^{k!}>-k^{n!}$. Adding up the inequalities yields
	$$
	n!!^k-n^{k!}>k!!^n-k^{n!}.
	$$
	\section{Proof of Theorem 1.5}
	\noindent
	The equation $n!!^{k!!}+n^k=k!!^{n!!}+k^n$ holds for $k=n$. We now show that $n!!^{k!!}+n^k=k!!^{n!!}+k^n$ implies $k=n$. By symmetry we assume $n\geq k$ and consider four cases.\\
	\noindent
	the case $k=1$:\\
	\noindent
	Our equation becomes $n!!+n=2$. But this holds only for $n=1$.\\
	\noindent
	the case $k=2$:\\
	\noindent
	If $k=2$, the equation is $n!!^2+n^2=2^{n!!}+2^n$. By an easy induction argument one can show that $2^m>m^2$ for all $m\geq 5$. From this, we obtain that $2^n+2^{n!!}>n^2+n!!^2$ for $n\geq 5$. Hence there is no solution for $n\geq 5$. By pluging in $n=1$, $n=2$, $n=3$ and $n=4$, we get that $n=2$ is the only solution.\\
	\noindent
	the case $k=3$:\\
	\noindent
	Now the equation becomes $n!!^3+n^3=3^{n!!}+3^n$. Pluging in $n=3$ gives $3!!^3+3^3=3^{3!!}+3^3$. Now by a simple induction argument we get $3^m>m^3$ for $m\geq 4$. And this implies $3^n+3^{n!!}>n^3+n!!^3$ for $n\geq 4$.\\
	\noindent
	the case $k\geq 4$:\\
	\noindent
	Note that $n\geq k\geq 4$. 
	But this implies that there exists a rational number $b\geq 1$ such that $n=bk$. To continue, we assume $b>1$ and produce a contradiction. Since $(n^{1/n})_{n=3}^{\infty}$ is strictly increasing and since $n>k$ as $b>1$, we have $k^n-n^k>0$. Thus 
	$$
	k^n-n^k=n!!^{k!!}-k!!^{(bk)!!}>0.
	$$
	Note that there is a rational number $c\geq 1$ such that $(bk)!!=c\cdot k!!$ and that, since we assumed $b>1$, we have $c>b>1$. 
	Thus $n!!-k!!^c\geq 1$. It follows
	$$
	k^{bk}=k^n>k^n-n^k=n!!^{k!!}-k!!^{c\cdot k!!}=(n!!-k!!^c)\cdot \sum_{j=0}^{k!!-1}n!!^jk!!^{c(k!!-1-j)}>k!!^{c(k!!-1)}.
	$$
	Since $k!!-1>k-1$ and $c>b$, we obtain
	$$
	k^{bk}>k!!^{c(k!!-1)}>k!!^{b(k-1)}
	$$
	or equivalently 
	$$
	k^{k/(k-1)}>k!!.
	$$
	But this is false, since for $k\geq 4$ one has
	$$
	k^{k/(k-1)}\leq k^{4/3}=k\cdot \sqrt[3]{k}<k\cdot (k-2)\leq k\cdot (k-2)\cdot (k-4)\cdots =k!!.
	$$
	This gives a contradiction and therefore $b=1$. This completes the proof.

	\section{Proof of Theorems 1.6}
	\noindent
	We give the proof only for the cases $r=1,2$, since the arguments for arbitrary $r>2$ are analogous. So let $r=1$. We consider the equation
	\begin{center}
		$bn!!A^n=x^d$.
	\end{center}
	If $n$ is odd and $n>2\mathrm{max}\{A,|b|\}$, then there is a prime number $p$ in the interval $(n/2,n)$ wich is larger than $\mathrm{max}\{A,|b|\}$. The prime $p$ will appear with exponent one in $bn!!A^n$. Since $d>1$, the number $bn!!A^n$ cannot be a perfect power. If $n=2l$ is even and  $l>2\mathrm{max}\{A,|b|,2\}$, then there is a prime number $p$ in the interval $(l/2,l)$ wich is larger than $\mathrm{max}\{A,|b|,2\}$. The prime $p$ will appear with exponent one in $bn!!A^n$.\\
	\noindent
	Now let $r=2$. We consider
	\begin{eqnarray}
		b n!! A^n m!! B^m=x^d.
	\end{eqnarray} 
	Without loss of generality, we assume $n,m$ to be odd. If $n=2l$ or $m=2s$ are even, we consider $l$ repectively $s$ instead of $n$ or $m$. This is also pointed out in the proof for the case $r=1$ above.\\
	\noindent
	If $n>2\mathrm{max}\{A,B,|b|\}$, then there is a prime number $p$ in the interval $(n/2,n)$ which is larger than $\mathrm{max}\{A,B,|b|\}$. There are three cases to consider. 
	\begin{itemize}
		\item[1)] $n>m$. In this case we see that the prime $p$ will appear with exponent at most two in the product $b n!! A^n m!! B^m$. Since $d\neq 2$, the product $b n!! A^n m!! B^m$ cannot be of the form $x^d$. Therefore, there are no integer solutions if $n>2\mathrm{max}\{A,B,|b|\}$ and $n>m$.
		\item[2)] $n<m$. In this case $m>2\mathrm{max}\{A,B,|b|\}$. Then there is a prime number $p$ in the interval $(m/2,m)$ which is larger than $\mathrm{max}\{A,B,|b|\}$. Again, this prime $p$ will appear with exponent at most two in the product $b n!! A^n m!! B^m$. Since $d\neq 2$, the product $b n!! A^n m!! B^m$ cannot be of the form $x^d$. Hence, there are no integer solutions if $n>2\mathrm{max}\{A,B,|b|\}$ and $n<m$.
		\item[3)] $n=m$. In this case equation (4) becomes
		\begin{eqnarray}
			b(n!!)^2(AB)^n=x^d.
		\end{eqnarray}
		As $n>2\mathrm{max}\{A,B,|b|\}$, the prime number $p$ in the interval $(n/2,n)$ will appear with exponent two in the product $b(n!!)^2(AB)^n$. Since $d\neq 2$, the product $b(n!!)^2(AB)^n$ cannot be of the form $x^d$. Therefore, there are no integer solutions if $n>2\mathrm{max}\{A,B,|b|\}$ and $n=m$.
	\end{itemize}
	So we are left with $n\leq 2\mathrm{max}\{A,B,|b|\}$. Now we consider the following cases.
	\begin{itemize}
		\item[1)] $m<n$. In this case there can be only finitely many integer solutions $(n,m,z)$ satisfying (4).
		\item[2)] $n=m$. In this case there can be only finitely many integer solutions $(n,n,z)$ satisfying (4).
		\item[3)] $n<m$. In this case, either we must have $n<m\leq2\mathrm{max}\{A,B,b\}$ or $n\leq 2\mathrm{max}\{A,B,b\}<m$. Clearly, if $n<m\leq2\mathrm{max}\{A,B,|b|\}$, there can be only finitely many integer solutions $(n,m,z)$ satisfying (4). Now if $n\leq 2\mathrm{max}\{A,B,|b|\}<m$, we conclude from 2) from above that there is a prime number $p$ in the interval $(m/2,m)$ which is larger than $\mathrm{max}\{A,B,|b|\}$. This prime $p$ will appear with exponent at most two in the product $b n!! A^n m!! B^m$. Since $d\neq 2$, the product $b n!! A^n m!! B^m$ cannot be of the form $x^d$. 
	\end{itemize}
	Summarizing, we see that the equation (4) can have only finitely many integer solutions. This completes the first part of the proof.\\
	\noindent
	Now let us consider the case $d\leq r$. Obviously, there are infinitely many integer solutions for $x=bn_1!!A_1^{n_1}n_2!!A_2^{n_2}\cdots n_r!!A_r^{n_r}$. We therefore assume $d\geq2$. Notice that in case $b<0$ and $d$ is even the equation has no solution. So we consider the equation
	\begin{center}
		$x^d=bn_1!!A_1^{n_1}n_2!!A_2^{n_2}\cdots n_r!!A_r^{n_r}$,
	\end{center}
	where $b>0$ and $d$ arbitrary or $b<0$ and $d$ odd. 
	Since $d\leq r$, we can rewrite the equation as
	\begin{center}
		$x^d=bn_1!!A_1^{n_1}n_2!!A_2^{n_2}\cdots n_d!!A_d^{n_d}\cdot(n_{d+1}!!A_{d+1}^{n_{d+1}}\cdots n_r!!A_r^{n_r})$.
	\end{center}
	Now we set $n_{d+1}=n_{d+2}=\cdots =n_r=1$ and $n_1=n_2=\cdots =n_{d-1}=m$ and $n_d=m+2$. Then the equation becomes
	\begin{center}
		$x^d=(A_1\cdots A_d)^m\cdot (m!!)^d\cdot bA_d^2\cdot A_{d+1}\cdots A_r\cdot(m+2)$.
	\end{center}
	We rewrite again:
	\begin{center}
		$x^d=(A_1\cdots A_d)^{(m-(d-2))}\cdot (m!!)^d\cdot b\cdot(A_1\cdots A_d)^{(d-2)}\cdot A_d^2\cdot A_{d+1}\cdots A_r\cdot(m+2)$.
	\end{center}
	Now we want to choose $m$ such that $m-(d-2)=m-d+2=ds$. This is equivalent to $m+2=d(s+1)$. If $b>0$, we set
	\begin{center}
		$R:=b\cdot(A_1\cdots A_d)^{(d-2)}\cdot A_d^2\cdot A_{d+1}\cdots A_r$
	\end{center}
	Then the above equation becomes 
	\begin{center}
		$x^d=(A_1\cdots A_d)^{(m-(d-2))}\cdot (m!!)^d\cdot Rd\cdot(s+1)$.
	\end{center}
	Now we can set $s=(Rd)^{td-1}-1$ where $t>0$ is any positive integer and choose $m$ such that  $m-(d-2)=d\cdot((Rd)^{td-1}-1)$ is a multiple of $d$. Notice that $d\geq2$ by assumption and hence $s\geq 1$. Our diophantine equation becomes
	\begin{center}
		$x^d=((A_1\cdots A_d)^{((Rd)^{td-1}-1)})^d\cdot ((d(Rd)^{td-1}-2)!!)^d\cdot ((Rd)^t))^d$.
	\end{center}
	This shows that we can find infinitely many interger solutions $(x,n_1,...,n_r)$ with $n_i>0$. If $b<0$ and $d$ odd, we set 
	\begin{center}
		$R':=|b|\cdot(A_1\cdots A_d)^{(d-2)}\cdot A_d^2\cdot A_{d+1}\cdots A_r$. 
	\end{center}
	Then $R=(-1)R'$ and the diophantine equation becomes
	\begin{center}
		$x^d=((A_1\cdots A_d)^{((Rd)^{td-1}-1)})^d\cdot ((d(Rd)^{td-1}-1)!!)^d\cdot ((Rd)^t))^d\cdot (-1)^{td}$.
	\end{center}
	Choosing $t$ odd, we conclude that $td$ is odd. Thus $td-1$ is even. Since $d\geq2$ it follows that $(Rd)^{td-1}-1>0$. This shows that there are also infinitely many integer solutions in this case. Note that the solutions are constructed only using the fixed integers $b,A_1,...,A_r$ and the given degree $d$. This completes the proof.
	\begin{rema2}
		\textnormal{As mentioned in the introduction, the statement of the theorem remains true if $x^d$ is replaced by $ax^d$ with fixed positive rational number $a$ and if $A_i^{n_i}$ is replaced by $A_i^{n_i!}$ or $A_i^{n_i!!}$. To prove that the first part of the theorem remains valid after replacing is straight forward. We give an argument only for the second part of the statement, namely if all the $A_i^{n_i}$ are replaced by $A_i^{n_i!}$. We give the argument only for $d$ even, since the case $d$ odd is similar. So let $d\geq 2$ be even and assume that $b>0$. Since $d\leq r$, we rewrite our equation as 
		$$
	x^d=bA_1^{n_1!}n_1!\cdots A_d^{n_d!}n_d!\cdot (A_{d+1}^{n_{d+1}!}n_{d+1}!)\cdots A_r^{n_r!}n_r!).
$$
Now we set $n_{d+1}=n_{d+2}=\cdots =n_r=1$ and $n_1=\cdots =n_{d-1}=m$ and $n_d=m+2$. Then the equation becomes
$$
x^d=(m!!)^d(A_1\cdots A_d^{(m+2)(m+1)})^{m!}\cdot((m+2)bA_{d+1}\cdots A_r).
$$
Since $d\geq 2 $ we can set $m+2=b^{d-1}(A_{d+1}\cdots A_r)^{d-1}\cdot y^d$, with $y\in \mathbb{N}$ large enough such that $m>d$ and see that the equation has infinitely many integer solutions of the form
$$
x=m!!A_1^t\cdots A_d^{(m+2)(m+1)t}bA_{d+1}\cdots A_r\cdot y,
$$
where $t$ is such that $m!=t\cdot d$. }
	\end{rema2}
	\section{Proof of Theorem 1.7}
	\noindent
	Multiplying the equation $bn_1!!A_1^{n_1}\cdots n_r!!A_r^{n_r}=f(x)$ by a certain integer, we may assume that $f(x)$ is a polynomial with integer coefficients. So without loss of generality, we assume 
	\begin{eqnarray*}
		f(x)=a_0x^d+a_1x^{d-1}+...+a_d
	\end{eqnarray*}
	with $a_i\in \mathbb{Z}$. Now multiply the equation $bn_1!!A_1^{n_1}\cdots n_r!!A_r^{n_r}=f(x)$ by $d^da_0^{d-1}$. We obtain
	\begin{eqnarray*}
		y^d+b_1y^{d-1}+...+b_d=c(n_1!!A_1^{n_1}\cdots n_r!!A_r^{n_r})
	\end{eqnarray*}
	for a constant $c$, where $c=bd^da_0^{d-1}$ and $y:=a_0dx$. Notice that $b_i=d^ia_ia_0^{i-1}$ so that we can make the change of variable $z:=y+\frac{b_1}{d}$. Since we are assuming that $f(x)$ has at least two distinct roots, the change of variable produces a polynomial that does not have a monomial of degree $d-1$. Therefore we get the following equation
	\begin{eqnarray}
		z^d+c_2d^{d-2}+...+c_d=c(n_1!!A_1^{n_1}\cdots n_r!!A_r^{n_r}).
	\end{eqnarray}
	Notice that $c_i$ are integer coefficients wich can be computed in terms of $a_i$ and $d$. Now let $Q(X)=X^d+c_2X^{d-2}+...+c_d$ and notice that when $|z|$ is large one has
	\begin{eqnarray}
		\frac{|z|^d}{2}<|Q(z)|<2|z|^d.
	\end{eqnarray}
	For the rest of the proof we denote by $C_1,C_2,...$ computable positive constants depending on the coefficients $a_i$ and eventually on some small $\epsilon >0$ which comes into play later by applying the ABC-conjecture.
	
	Whenever $(n_1,...,n_r,z)$ is a solution to $n_1!!A_1^{n_1}\cdots n_r!!A_r^{n_r}=f(x)$ we conclude from (5) and (6) that there exist constants $C_1$ and $C_2$ such that
	\begin{eqnarray}
		|d\cdot\mathrm{log}|z|-\mathrm{log}(n_1!!A_1^{n_1}\cdots n_r!!A_r^{n_r})|<C_1,
	\end{eqnarray}
	for $|z|>C_2$ (see \cite{L} equation (10)). 
	Now let $R(X)\in \mathbb{Z}[X]$ be such that $Q(X)=X^d+R(X)$. Since $f(x)$ is not monomial and has at least two distinct roots, $R(X)$ can be assumed to be non-zero, let $j\leq d$ be the largest integer with $c_j\neq 0$. We rewrite (5) as
	\begin{eqnarray*}
		z^j+c_2z^{j-2}+...+c_j=\frac{c(n_1!!A_1^{n_1}\cdots n_r!!A_r^{n_r})}{z^{d-j}}.
	\end{eqnarray*}
	Let $R_1(X)$ be the polynomial 
	\begin{eqnarray*}
		R_1(X):= \frac{R(X)}{X^{d-j}}=c_2X^{j-2}+\cdots +c_j.
	\end{eqnarray*}
	It is shown in \cite{L} there are constants $C_3$ and $C_4\geq C_2$ such that
	\begin{eqnarray*}
		0<|R_1(z)|< C_3|z|^{j-2},
	\end{eqnarray*}
	for $|z|> C_4$. So we have 
	\begin{center}
		$z^j+R_1(z)=\frac{c(n_1!!A_1^{n_1}\cdots n_r!!A_r^{n_r})}{z^{d-j}}$. 
	\end{center}
	For $D=\mathrm{gcd}(z^j, R_1(z))$ we have
	\begin{eqnarray*}
		\frac{z^j}{D}+\frac{R_1(z)}{D}=\frac{c(n_1!!A_1^{n_1}\cdots n_r!!A_r^{n_r})}{z^{d-j}D}
	\end{eqnarray*}
	Applying the ABC-conjecture to $A=\frac{z^j}{D}$, $B=\frac{R_1(z)}{D}$ and $C=\frac{c(n_1!!A_1^{n_1}\cdots n_r!!A_r^{n_r})}{z^{d-j}D}$, we find
	\begin{eqnarray}
		\frac{|z|^j}{D}< C_5N(\frac{z^jR_1(z)c(n_1!!A_1^{n_1}\cdots n_r!!A_r^{n_r})}{D^3})^{1+\epsilon},
	\end{eqnarray}
	where $C_5$ depends only on $\epsilon$. It is shown in \cite{L}, p.272 that 
	\begin{eqnarray}
		N(\frac{|z|^j}{D})\leq |z|,\\
		N(\frac{R_1(z)}{D})<\frac{C_3|z|^{j-2}}{D}.
	\end{eqnarray}
	Moreover, we have
	\begin{eqnarray*}
		N(\frac{c(n_1!!A_1^{n_1}\cdots n_r!!A_r^{n_r})}{z^{d-j}D})\leq N(c)N(n_1!!A_1^{n_1}\cdots n_r!!A_r^{n_r})\leq N(c)N(n_1!!A_1^{n_1})\cdots N(n_r!!A_r^{n_r}).
	\end{eqnarray*}
	This gives 
	\begin{eqnarray*}
		N(\frac{c(n_1!!A_1^{n_1}\cdots n_r!!A_r^{n_r})}{z^{d-j}D})\leq N(c)N(n_1!!A_1^{n_1})\cdots N(n_r!!A_r^{n_r})\leq C_6N(n_1!!)\cdots N(n_r!!),
	\end{eqnarray*} 
	where $C_6=N(c)N(A_1)\cdots N(A_r)$. 
	From (1) it follows
	\begin{eqnarray}
		N(\frac{c(n_1!!A_1^{n_1}\cdots n_r!!A_r^{n_r})}{z^{d-j}D})<C_64^{n_1}\cdots 4^{n_r}=C_64^{(n_1+\cdots +n_r)}
	\end{eqnarray}
	and from (9), (10) and (11) we get
	\begin{eqnarray}
		N(\frac{|z|^j}{D})N(\frac{R_1(z)}{D})N(\frac{c(n_1!!A_1^{n_1}\cdots n_r!!A_r^{n_r})}{z^{d-j}D})<\frac{C_3C_6|z|^{j-1}4^{(n_1+\cdots +n_r)}}{D}.
	\end{eqnarray}
	From inequalities (8) and (12), we obtain
	\begin{eqnarray}
		\frac{|z|^j}{D}<C_7\bigl(\frac{|z|^{j-1}4^{(n_1+\cdots +n_r)}}{D}\bigr)^{(1+\epsilon)}
	\end{eqnarray}

	If we choose $\epsilon =\frac{1}{2d}\leq \frac{1}{2j}$, inequality (13) implies that
	\begin{eqnarray*}
		|z|^{1/2}<|z|^{1+\epsilon -\epsilon j}< C_84^{(n_1+...+n_r)(1+\epsilon)},
	\end{eqnarray*}
	or simply 
	\begin{eqnarray*}
		\mathrm{log}|z|<C_9n_1+...+C_9n_r+C_{10}.
	\end{eqnarray*}
	Thus
	\begin{eqnarray*}
		d\cdot\mathrm{log}|z|<C_{11}n_1+...+C_{11}n_r+C_{12}.
	\end{eqnarray*}
	This gives
	\begin{eqnarray}
		\mathrm{log}(n_1!!A_1^{n_1}\cdots n_r!!A_r^{n_r})<C_1+d\cdot \mathrm{log}|z|< C_{11}n_1+...+C_{11}n_r+C_{13}.
	\end{eqnarray}
	We can simplify (14) and finally obtain
	\begin{center}
		$\mathrm{log}(n_1!!)+\mathrm{log}(n_2!!)+\cdots +\mathrm{log}(n_r!!)<C_{14}n_1+C_{14}n_2+\cdots C_{14}n_r+C_{13}$.	
	\end{center}
	Now we can conclude that only finitely many $(n_1,...,n_r)$ satisfy (14). We give the argument for $r=2$. So lets consider an inequality of the form
	\begin{center}
		$\mathrm{log}(n!!)+\mathrm{log}(m!!)<A'n+B'm+C'$
	\end{center}
	where $A',B'$ and $C'$ are positive constant intergers. Assume there are infinitely many pairs $(n,m)$ of natural numbers satisfying the inequality. There are three cases: 
	\begin{itemize}
		\item[1)] infinitely many $n$ and finitely many $m$: let $s$ denote the maximum of these $m$ and $t$ the minimum. Then we have
		\begin{center}
			$\mathrm{log}(n!!)+\mathrm{log}(t!!)<A'n+B's+C'$
		\end{center}
		and therefore
		\begin{center}
			$\mathrm{log}(n!!)<A'n+E'$
		\end{center}
		where $E'=\mathrm{log}(t!!)+B's+C'$ is a constant. With Stirling's approximation for $n!!$ we find that there are only finitely many $n$ satisfying 
		\begin{center}
			$\mathrm{log}(n!!)<A'n+E'$.
		\end{center}
		This gives a contradition. 
		\item[2)] infinitely many $m$ and finitely many $n$: reverse the role of $n$ and $m$. 
		\item[3)] infinitely many $n$ and infinitely many $m$: the argument is similar. At some point Stirlings apprximation shows that $\mathrm{log}(n!!)$ exceeds $A'n$ and $\mathrm{log}(m!!)$ exceeds $B'm+C'$. Therefore infinitely many $n$ and infinitely many $m$ is impossible. 
	\end{itemize}
	Since there are only finitely many $(n_1,...,n_r)$ satisfying  (14), we finally conclude from (7) that $|z|< C_{15}$. This completes the proof.
	\begin{rema2}
		\textnormal{Since $N(A^{n!}n!!)<N(A)N(n!!)<N(A)\cdot 4^n$ and $N(A^{n!!}n!!)<N(A)N(n!!)<N(A)\cdot 4^n$, the statement of Theorem 1.7 remains valid if $A_i^{n_i}$ is replaced by $A_i^{n_i!}$ or $A_i^{n_i!!}$. It is even possible to mix all these factors and consider, for example, equations of the $f(x)=bA^{n!}n!B^{m}m!!C^{l!!}l!!$. The ABC-conjecture implies that there are finitely many integer solutions for these equations as well.}
	\end{rema2}

\vspace{0.3cm}
\noindent
{\tiny HOCHSCHULE FRESENIUS UNIVERSITY OF APPLIED SCIENCES 40476 D\"USSELDORF, GERMANY.}\\
E-mail adress: sasa.novakovic@hs-fresenius.de\\


\begin{thebibliography}{999}
	\bibitem{AL} H. Alzer and F. Luca, Diophantine equations involving factorials. Mathematica Bohemica 4 (2017), p. 181-184.
	\bibitem{BN} D. Baczkowski and S. Novakovi\'c, Some diophantine equations involving arithmetic functions and Bhargava factorials. Colloqium Mathematicum 177 (2024), 21-30.
	\bibitem{BR} H. Brocard: Question 1532. Nouv. Corresp. Math. 2 (1876); Nouv. Ann. Math. 4 (1885), 391.
	\bibitem{EO} P. Erd\H{o}s and R. Obl\'ath, \"Uber diophantische Gleichungen der Form $n!=x^p \pm y^p$ und $n!\pm m!= x^p$. Acta Szeged. 8 (1937), 241-255.
	\bibitem{FL} K. Ford, F. Luca and C. Pomerance, Common values of the arithmetic function $\phi$ and $\sigma$. Bull. Lond. Math. Soc. 42 (2010), 478-488.
	\bibitem{LA} S. Lang: Old and new conjectured diophantine inequalities. Bull. Amer. Math. Soc. 23 (1990), 37-75.
	\bibitem{L} F. Luca, The Diophantine equation $P(x)=n!$ and a result of M. Overholt. Glasnik Matemati\'cki 37 (2002), 269-273.
	\bibitem{NO} S. Novakovi\'c, A note on some polynomial-factorial Diophantine equations. Glasnik Matemati\'cki 60 (2025), 21-38.
	\bibitem{O} M. Overholt, The Diophantine equation $n!+1=m^2$. Bull. London. Math. Soc. 42 (1993), 104.
	\bibitem{RA} S. Ramanujan, Question 469. J. Indian Math. Soc. 5 (1913), 59.
\bibitem{MU} M. Ulas, Some observations on the Diophantine equation $y^2=x!+A$ and related results. Bull. Aust. Math. Soc. 86 (2012), 377-388.
	
\end{thebibliography}
\end{document}